\newcommand{\X}{\mathfrak{X}}
\newcommand{\F}{\mathcal{F}}
\newcommand{\U}{\mathcal{U}}
\newcommand{\T}{\mathcal{T}}
\newcommand{\dt}{\frac{\mathrm{d}}{\mathrm{d}t}}
\newcommand{\ddt}{\partial_t}
\newcommand{\n}{\nabla}
\newcommand{\f}{\varphi}
\newcommand{\D}{{\rm d}}
\newcommand{\al}{\alpha}
\newcommand{\bt}{\beta}
\newcommand{\gm}{\gamma}
\newcommand{\lm}{\lambda}
\newcommand{\ta}{\theta}
\newcommand{\om}{\omega}
\DeclareMathOperator{\Id}{Id}
\newcommand{\grad}{\mathrm{grad}}
\newcommand{\sx}{\mathop{\mathfrak{S}}\limits_{x,y,z}}
\newcommand{\M}{(M,\allowbreak\f,\allowbreak\xi,\allowbreak\eta,g)}
\newcommand{\s}{\mathfrak{S}}
\newcommand{\R}{\mathbb{R}}
\newcommand{\C}{\mathbb{C}}
\newcommand{\propref}[1]{Pro\-po\-si\-ti\-on~\ref{#1}}
\newcommand{\lemref}[1]{Lem\-ma~\ref{#1}}
\newcommand{\corref}[1]{Corollary~\ref{#1}}
\newtheorem{thm}{Theorem}%[section]
\newtheorem{lem}[thm]{Lemma}
\newtheorem{prop}[thm]{Proposition}
\newtheorem{cor}[thm]{Corollary}
\newtheorem{defn}{Definition}%[section]
\begin{document}

\title[Canonical-type connection]{Canonical-type connection on almost contact manifolds with
B-metric
%Grants or other notes
%about the article that should go on the front page should be
%placed here. General acknowledgments should be placed at the end of the article.
}
%\subtitle{Do you have a subtitle?\\ If so, write it here}

      % if too long for running head

\author{Mancho Manev \and Miroslava Ivanova
%        Second Author %etc.
}

%\authorrunning{Short form of author list} % if too long for running head

\address{%M. Manev \and M. Ivanova)
                University of Plovdiv,
                Faculty of Mathematics and Informatics,
                Department of Algebra and Geometry\\
                236 Bulgaria Blvd,
                Plovdiv 4003,
                Bulgaria,
              Tel.: +359-32-261744,
              Fax: +359-32-261794}
              \email{mmanev@uni-plovdiv.bg, mirkaiv@uni-plovdiv.bg}        %  \\
%             \emph{Present address:} of F. Author  %  if needed
%           \and
%           S. Author \at
%             second address

% The correct dates will be entered by the editor

\begin{abstract}
The canonical-type connection on the almost contact manifolds with
B-met\-ric is constructed. It is proved that its torsion is
invariant with respect to a subgroup of the general conformal
transformations of the almost contact B-metric structure. The
basic classes of the considered manifolds are characterized in
terms of the torsion of the canonical-type connection.
\end{abstract}

\keywords{almost contact manifold, B-metric, natural
connection, canonical connection, conformal transformation, torsion tensor} %
\subjclass[2010]{53C05, 53C15, 53C50}

%%% ----------------------------------------------------------------------
\maketitle
%%% ----------------------------------------------------------------------

\section*{Introduction}

In the differential geometry of the manifolds with additional
structures, there are important the so-called natural connections,
i.e. linear connections with torsion such that the additional
structures are parallel with respect to them. There exists a
significant interest to these natural connections which have some
additional geometric or algebraic properties, for instance about
their torsion.

On an almost Hermitian manifold $(M,J,g)$ there exists a unique
natural connection $\n^C$ with a torsion $T$ such that
$T(J\cdot,J\cdot)=-T(\cdot,\cdot)$. This connection is known as
the canonical Hermitian connection or the Chern connection. An
example of the natural Hermitian connection is the first canonical
connection of Lichnerowicz $\n^L$ \cite{Li1,Li2}.
%In
%\cite{Gaud} it is noted that among the natural connections the
%first canonical connection is the one whose torsion has minimal
%possible norm.
According to \cite{Gaud}, there exists a one-parameter family of
canonical Hermitian connections $\n^t=t\n^C+(1-t)\n^L$. The
connection $\n^t$ obtained for $t=-1$ is called the Bismut
connection or the KT-connection, which is characterized with a
totally skew-symmetric torsion. The latter connection with a
closed torsion 3-form has applications in type II string theory
and in 2-dimensional supersymmetric $\sigma$-models
\cite{GHR,Stro,IvPapa}. In \cite{Fri-Iv2} and \cite{Fri-Iv} all
almost contact metric, almost Hermitian and $G_2$-structures
admitting a connection with totally skew-symmetric torsion tensor
are described.

Natural connections of canonical type are considered on the
Riemannian almost product manifolds in
\cite{Dobr11-1,Dobr11-2,MekDobr} and on the almost complex
manifolds with Norden metric in \cite{GaMi87,GaGrMi,Mek-JTech}.
The connection in \cite{GaGrMi} is the so-called B-connection,
which is studied on the class of the locally conformal K\"ahlerian
manifolds with Norden metric.

In the present paper\footnote{Partially supported by project
NI11-FMI-004 of the Scientific Research Fund, Paisii Hilendarski
University of Plovdiv, Bulgaria and the German Academic Exchange
Service (DAAD)} we consider natural connections (i.e. preserving
the structure) of canonical type on the almost contact manifolds
with B-metric. These manifolds are the odd-dimensional extension
of the almost complex manifolds with Nor\-den metric and the case
with indefinite metrics corresponding to the almost contact metric
manifolds.

The paper is organized as follows. In Sec.~1 we give some
necessary facts about the considered manifolds.
In Sec.~2 we define a natural connection of canonical type on an
almost contact manifold with B-metric. We determine the class of
the considered manifolds where this connection and a known natural
connection coincide.
In Sec.~3 we consider the group $G$ of the general conformal
transformations of the almost contact B-metric structure. We
determine the invariant class of the considered manifolds and a
tensor invariant of the group $G$.
In Sec.~4 we establish that the torsion of the canonical-type
connection is invariant exactly in the subgroup $G_0$ of $G$. We
characterize the basic classes of the considered manifolds by the
torsion of the canonical-type connection.
In Sec.~5 we determine the class of the almost contact B-metric
manifolds of Sasakian type and supply a relevant example.

\section{Almost Contact Manifolds with B-metric}\label{sec:1}

Let $(M,\f,\xi,\eta)$ be an almost contact manifold,  i.e.  $M$ is
a $(2n+1)$-dimensional diffe\-ren\-tia\-ble manifold with an
almost contact structure $(\f,\xi,\eta)$ consisting of an
endomorphism $\f$ of the tangent bundle, a vector field $\xi$ and
its dual 1-form $\eta$ such that the following algebraic relations
are satisfied:
\begin{equation}\label{str1}
\f\xi = 0,\quad \f^2 = -\Id + \eta \otimes \xi,\quad
\eta\circ\f=0,\quad \eta(\xi)=1.
\end{equation}
Further, let us endow the almost contact manifold
$(M,\f,\xi,\eta)$ with a pseudo-Riemannian metric $g$ of signature
$(n,n+1)$ determined by
\begin{equation}\label{str2}
g(\f x, \f y ) = - g(x, y ) + \eta(x)\eta(y)
\end{equation}
for arbitrary $x$, $y$ of the algebra $\X(M)$ on the smooth vector
fields on $M$.
Then $(M,\f,\xi,\eta,g)$ is called an almost contact manifold with
B-metric or an \emph{almost contact B-metric manifold}.

Further, $x$, $y$, $z$ will stand for arbitrary elements of
$\X(M)$.

The associated metric $\tilde{g}$ of $g$ on $M$ is defined by
$\tilde{g}(x,y)=g(x,\f y)\allowbreak+\eta(x)\eta(y)$. Both metrics
$g$ and $\tilde{g}$ are necessarily of signature $(n,n+1)$. The
manifold $(M,\f,\xi,\eta,\tilde{g})$ is also an almost contact
B-metric manifold.

Let us remark that the $2n$-dimensional contact distribution
$H=\ker(\eta)$ generated by the contact 1-form $\eta$ can be
considered as the horizontal distribution of the sub-Riemannian
manifold $M$. Then $H$ is endowed with an almost complex structure
determined as $\f|_H$ -- the restriction of $\f$ on $H$, as well
as a Norden metric $g|_H$, i.e.
$g|_H(\f|_H\cdot,\f|_H\cdot)=-g|_H(\cdot,\cdot)$. Moreover, $H$
can be considered as a $n$-dimensional complex Riemannian manifold
with a complex Riemannian metric $g^{\C}=g|_H+i\tilde{g}|_H$
\cite{GaIv}.

The structural group of the almost contact B-metric manifolds is
$\allowbreak\bigl(GL(n,\mathbb{C})\cap
O(n,n)\bigr)\allowbreak\times I_1$, i.e. it consists of real
square matrices of order $2n+1$ of the following type
\[
\left(%
\begin{array}{r|c|c}
  A & B & \vartheta^T\\ \hline
  -B & A & \vartheta^T\\ \hline
  \vartheta & \vartheta & 1 \\
\end{array}%
\right),\qquad %
\begin{array}{l}
  A^TA-B^TB=I_n,\\%
  B^TA+A^TB=O_n,
\end{array}%
\quad A, B\in GL(n;\mathbb{R}),
\]
where $\vartheta$ and its transpose $\vartheta^T$ are the zero row
$n$-vector and the zero column $n$-vector; $I_n$ and $O_n$ are the
unit matrix and the zero matrix of size $n$, respectively.

\subsection{The structural tensor $F$}

The covariant derivatives of $\f$, $\xi$, $\eta$ with respect to
the Levi-Civita connection $\n$ play a fundamental role in the
differential geometry on the almost contact manifolds.  The
structural tensor $F$ of type (0,3) on $\M$ is defined by
\begin{equation}\label{F=nfi}
F(x,y,z)=g\bigl( \left( \nabla_x \f \right)y,z\bigr).
\end{equation}
It has the following properties:
\begin{equation}\label{F-prop}
\begin{split}
F(x,y,z)&=F(x,z,y)=F(x,\f y,\f z)+\eta(y)F(x,\xi,z)
+\eta(z)F(x,y,\xi).
\end{split}
\end{equation}
The relations of $\n\xi$ and $\n\eta$ with $F$ are:
\begin{equation*}\label{Fxieta}
    \left(\n_x\eta\right)y=g\left(\n_x\xi,y\right)=F(x,\f y,\xi).
\end{equation*}

The following 1-forms are associated with $F$:
\begin{equation}\label{titi}
\begin{array}{c}
\ta(z)=g^{ij}F(e_i,e_j,z),\quad \ta^*(z)=g^{ij}F(e_i,\f
e_j,z),\quad \om(z)=F(\xi,\xi,z),
\end{array}
\end{equation}
where $g^{ij}$ are the components of the inverse matrix of $g$
with respect to a basis $\left\{e_i;\xi\right\}$
$(i=1,2,\dots,2n)$ of the tangent space $T_pM$ of $M$ at an
arbitrary point $p\in M$. Obviously, the equality $\om(\xi)=0$ and
the following relation are always valid:
\begin{equation}\label{tata*}
\ta^*\circ\f=-\ta\circ\f^2.
\end{equation}

A classification of the almost contact B-metric manifolds with
respect to $F$ is given in \cite{GaMiGr}. This classification
includes eleven basic classes $\F_1$, $\F_2$, $\dots$, $\F_{11}$.
Their intersection is the special class $\F_0$ determined by the
condition $F(x,y,z)=0$. Hence $\F_0$ is the class of almost
contact B-metric manifolds with $\n$-parallel structures, i.e.
$\n\f=\n\xi=\n\eta=\n g=\n \tilde{g}=0$.

Further we use the following characteristic conditions of the
basic classes:
\begin{equation}\label{Fi}
\begin{array}{rl}
\F_{1}:\quad &F(x,y,z)=\frac{1}{2n}\bigl\{g(x,\f y)\ta(\f z)+g(\f
x,\f y)\ta(\f^2 z)
\bigr\}_{(y\leftrightarrow z)};\\
\F_{2}:\quad &F(\xi,y,z)=F(x,\xi,z)=0,\quad
              \sx F(x,y,\f z)=0,\quad \ta=0;\\
\F_{3}:\quad &F(\xi,y,z)=F(x,\xi,z)=0,\quad
              \sx F(x,y,z)=0;\\
\F_{4}:\quad &F(x,y,z)=-\frac{\ta(\xi)}{2n}\bigl\{g(\f x,\f y)\eta(z)+g(\f x,\f z)\eta(y)\bigr\};\\
\F_{5}:\quad &F(x,y,z)=-\frac{\ta^*(\xi)}{2n}\bigl\{g( x,\f y)\eta(z)+g(x,\f z)\eta(y)\bigr\};\\
\F_{6/7}:\quad &F(x,y,z)=F(x,y,\xi)\eta(z)+F(x,z,\xi)\eta(y),\quad \\
                &F(x,y,\xi)=\pm F(y,x,\xi)=-F(\f x,\f y,\xi),\quad \ta=\ta^*=0; \\
\F_{8/9}:\quad &F(x,y,z)=F(x,y,\xi)\eta(z)+F(x,z,\xi)\eta(y),\\
                &F(x,y,\xi)=\pm F(y,x,\xi)=F(\f x,\f y,\xi); \\
\F_{10}:\quad &F(x,y,z)=F(\xi,\f y,\f z)\eta(x); \\
\F_{11}:\quad
&F(x,y,z)=\eta(x)\left\{\eta(y)\om(z)+\eta(z)\om(y)\right\},
\end{array}
\end{equation}
where (for the sake of brevity) we use the following denotations:
$\{A(x,y,z)\}_{(x\leftrightarrow y)}$ --- instead of
$\{A(x,y,z)+A(y,x,z)\}$ for any tensor $A(x,y,z)$; $\s$ --- for
the cyclic sum by three arguments; and the former and latter
subscripts of $\F_{i/j}$ correspond to upper and down signs plus
or minus, respectively.

\subsection{The Nijenhuis tensor $N$}

An almost contact structure $(\f,\xi,\eta)$ on $M$ is called
\emph{normal} and respectively $(M,\f,\xi,\eta)$ is a \emph{normal
almost contact manifold} if the corresponding almost complex
structure $J$ generated on $M'=M\times \R$ is integrable (i.e.
$M'$ is a complex manifold) \cite{SaHa}. The almost contact
structure is normal if and only if the Nijenhuis tensor of
$(\f,\xi,\eta)$ is zero \cite{Blair}.

The Nijenhuis tensor $N$ of the almost contact structure is
defined by
\[
N := [\f, \f]+ \D{\eta}\otimes\xi,
\]
where $[\f, \f](x, y)=\left[\f x,\f
y\right]+\f^2\left[x,y\right]-\f\left[\f x,y\right]-\f\left[x,\f
y\right]$ and $\D{\eta}$ is the exterior derivative of the 1-form
$\eta$. Obviously, $N$ is an antisymmetric tensor, i.e.
$N(x,y)=-N(y,x)$. Hence, using $[x,y]=\n_xy-\n_yx$ and
$\D{\eta}(x,y)=\left(\n_x\eta\right)y-\left(\n_y\eta\right)x$, the
tensor $N$ has the following form in terms of the covariant
derivatives with respect to the Levi-Civita connection $\n$:
\begin{equation}\label{N}
\begin{split}
N(x,y)&=\left(\n_{\f x}\f\right)y-\f\left(\n_{x}\f\right)y+\left(\n_{x}\eta\right)y\cdot\xi\\
& -\left(\n_{\f
y}\f\right)x+\f\left(\n_{y}\f\right)x-\left(\n_{y}\eta\right)x\cdot\xi.
\end{split}
\end{equation}
The corresponding Nijenhuis tensor of type (0,3) on $\M$ is
defined by $N(x,y,z)\allowbreak{}=g\left(N(x,y),z\right)$. Then,
from \eqref{N} and \eqref{F=nfi}, we have
\begin{equation}\label{NF}
\begin{split}
N(x,y,z)&=\bigl\{F(\f x,y,z)-F(x,y,\f z)+F(x,\f
y,\xi)\eta(z)\bigr\}_{[x\leftrightarrow y]}.
\end{split}
\end{equation}
where we use the denotation $\left\{A(x, y,
z)\right\}_{[x\leftrightarrow y]}$ instead of $\{A(x, y, z) - A(y,
x, z)\}$ for any tensor $A(x, y, z)$.

\begin{lem}\label{lem-N}
The Nijenhuis tensor on an arbitrary almost B-metric manifold has
the following properties:
\begin{gather}
N(\f x, \f y,\f z)=-N(\f^2 x, \f^2 y,\f z)=N(\f x, \f^2 y,\f^2 z),\nonumber\\
N(\f^2 x, \f y,\f z)=N(\f x, \f^2 y,\f z)=-N(\f x, \f y,\f^2
z).\nonumber
\end{gather}
\end{lem}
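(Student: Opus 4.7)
The plan is to substitute the argument combinations directly into formula \eqref{NF}, written out as
\[
N(x,y,z) = \bigl\{F(\f x,y,z) - F(x,y,\f z) + F(x,\f y,\xi)\eta(z)\bigr\}_{[x\leftrightarrow y]},
\]
and then simplify using two elementary structural facts. First, since $\eta\circ\f=0$, every occurrence of $\eta(\f z)$ or $\eta(\f^2 z)$ vanishes, killing the $\eta(z)$-factors in \eqref{NF} after substitution. Second, from $\f^2=-\Id+\eta\otimes\xi$ and $\f\xi=0$ one has $\f^3=-\f$, so every third power of $\f$ collapses.

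Next I would extract two auxiliary identities on $F$ from \eqref{F-prop}. Reading the main part of \eqref{F-prop} as $F(A,B,C)=F(A,\f B,\f C)+\eta(B)F(A,\xi,C)+\eta(C)F(A,B,\xi)$ and substituting $B\mapsto\f B$, $C\mapsto\f C$ gives, via $\eta\circ\f=0$,
\[
F(A, \f B, \f C) = F(A, \f^2 B, \f^2 C). \qquad(\star)
\]
Substituting instead $A=\f x$, $B=\f z$, $C=\f^2 y$ in the same relation and invoking $\f^3=-\f$, $\eta\circ\f=0$, together with the symmetry $F(x,y,z)=F(x,z,y)$ from \eqref{F-prop}, one obtains the key antisymmetry
\[
F(\f x, \f^2 y, \f z) = -F(\f x, \f y, \f^2 z). \qquad(\star\star)
\]

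With $(\star)$ and $(\star\star)$ in hand, each equality is verified by direct substitution. Plugging $(\f x,\f y,\f z)$ and $(\f^2 x,\f^2 y,\f z)$ into \eqref{NF} yields, after the $\f^3=-\f$ and $\eta(\f z)=0$ collapses, two sums of four $F$-terms each; identity $(\star)$ pairs $F(\f^2 x,\f y,\f z)$ with $F(\f^2 x,\f^2 y,\f^2 z)$ (and its $x\leftrightarrow y$ twin), making these cancel across the two expressions, while $(\star\star)$ handles the remaining $\f$-versus-$\f^2$ swaps between the $y$- and $z$-slots. This delivers $N(\f x,\f y,\f z)=-N(\f^2 x,\f^2 y,\f z)$. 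The identity $N(\f x,\f y,\f z)=N(\f x,\f^2 y,\f^2 z)$ follows by exactly the same bookkeeping, and the three terms on the second line of the lemma are related in turn by the identical recipe applied to $N(\f^2 x,\f y,\f z)$, $N(\f x,\f^2 y,\f z)$ and $N(\f x,\f y,\f^2 z)$.

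The main obstacle is the derivation of $(\star\star)$: it is the correct rewriting, in the mixed-parity $(\f,\f^2)$ setting of an almost contact manifold, of the familiar almost-complex-structure relation $N_J(Jx,Jy)=-N_J(x,y)$ restricted to the contact distribution $H=\ker\eta$. Once $(\star)$ and $(\star\star)$ are established, the remainder of the proof is routine bookkeeping among the six $F$-terms produced by each substitution into \eqref{NF}.
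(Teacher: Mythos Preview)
Your proposal is correct and follows essentially the same approach as the paper: both use \eqref{NF} together with the properties \eqref{F-prop} of $F$, your identities $(\star)$ and $(\star\star)$ being precisely the specific consequences of \eqref{F-prop} that are needed. The only difference is one of economy: the paper observes that the second line follows from the first by the substitutions $x\mapsto\f x$ (respectively $z\mapsto\f z$) and $\f^3=-\f$, rather than by repeating the direct $F$-term bookkeeping, which saves some work but is not a genuinely different idea.
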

\begin{proof}
Bearing in mind properties \eqref{F-prop} of $F$ and relation
\eqref{NF}, the equalities in the first line of the statement
follow. They imply the equalities in the last line by virtue of
\eqref{str1} and \eqref{str2}.
\end{proof}

\begin{lem}\label{lem-Nfifi}
The class $\U_0=
\F_1\oplus\F_2\oplus\F_4\oplus\F_5\oplus\F_6\oplus\F_8\oplus\F_9\oplus\F_{10}\oplus\F_{11}$
of the almost contact B-metric manifolds is determined by the
condition $N(\f\cdot,\f\cdot)=0$.
\end{lem}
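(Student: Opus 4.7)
The plan is to exploit the classification of almost contact B-metric manifolds into eleven basic classes. Since $N(\f\cdot,\f\cdot)=0$ is a linear condition on $F$ invariant under the structural group, its kernel is a direct sum of basic classes, so it suffices to verify the condition on each $\F_i$ individually. The first step is to derive a master formula for $N(\f x,\f y,z)$ by substituting $x\to\f x$, $y\to\f y$ in \eqref{NF} and using $\f^2=-\Id+\eta\otimes\xi$, $\eta\circ\f=0$, together with the consequence $F(u,\xi,\xi)=0$ of \eqref{F-prop}. This should yield
\begin{equation*}
N(\f x,\f y,z) = \bigl\{-F(x,\f y,z) + \eta(x)F(\xi,\f y,z) - F(\f x,\f y,\f z) - F(\f x,y,\xi)\eta(z)\bigr\}_{[x\leftrightarrow y]}.
\end{equation*}

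The second step is a class-by-class substitution of the defining expression of $F$ from \eqref{Fi}. For $\F_4,\F_5,\F_{10},\F_{11}$ each term in the $F$-expression carries an $\eta$-factor in an argument on which $\f$ acts in the master formula, so $\eta\circ\f=0$ eliminates every contribution. For $\F_1$, a direct substitution paired with $g(\f\cdot,\f\cdot)=-g+\eta\otimes\eta$ and the trace $\theta$ produces cancellations after antisymmetrization in $x,y$. For $\F_2$, the vanishings $F(\xi,\cdot,\cdot)=F(\cdot,\xi,\cdot)=0$ kill the $\xi$-contracted terms, the identity $F(x,\f y,\f z)=F(x,y,z)$ (coming from \eqref{F-prop} together with the class-2 conditions) simplifies $F(\f x,\f y,\f z)$, and the cyclic condition $\sx F(x,y,\f z)=0$ closes the argument. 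For the pairs $\F_{6/7}$ and $\F_{8/9}$, setting $\phi(u,v):=F(u,v,\xi)$ and inserting $F(u,v,w)=\phi(u,v)\eta(w)+\phi(u,w)\eta(v)$, and exploiting $\phi(\xi,\cdot)=0$ (a consequence of $\phi=\mp\phi(\f\cdot,\f\cdot)$ resp.\ $\pm\phi(\f\cdot,\f\cdot)$), the master formula collapses to
\begin{equation*}
N(\f x,\f y,z) = \bigl[\phi(\f y,x) - \phi(\f x,y) + \phi(y,\f x) - \phi(x,\f y)\bigr]\eta(z),
\end{equation*}
which vanishes on $\F_6,\F_8,\F_9$ via the appropriate symmetries of $\phi$, but on $\F_7$ reduces to $-4\phi(x,\f y)\eta(z)$, which is generically nonzero. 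Finally, on $\F_3$ the vanishing of $F$ in every $\xi$-slot leaves $N(\f x,\f y,z)=-F(x,\f y,z)+F(y,\f x,z)-F(\f x,\f y,\f z)+F(\f y,\f x,\f z)$, and the conditions $F(x,y,z)=F(x,z,y)$ and $\sx F(x,y,z)=0$ do not force this to vanish.

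The chief technical obstacle is the careful sign bookkeeping, especially in distinguishing $\F_6$ from $\F_7$ and $\F_8$ from $\F_9$; once the master formula is in place each class is a short substitution. The nonvanishing assertions on $\F_3$ and $\F_7$ are confirmed by exhibiting admissible tensors on which the residual expressions are nonzero, whence the sum of basic classes contained in the kernel is exactly $\U_0$.
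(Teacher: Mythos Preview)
Your approach is correct and essentially the same as the paper's: both arguments verify the condition class by class using \eqref{NF} and the defining conditions \eqref{Fi}. The only organizational difference is that the paper first computes $N(x,y)$ on each $\F_i$ (obtaining the compact formulas \eqref{N-1-11} in terms of $\n\f$, $\n\eta$, $\n\xi$) and then substitutes $\f x,\f y$, whereas you substitute $\f x,\f y$ into \eqref{NF} first to get your master formula and then specialize $F$; this is a minor reordering rather than a genuinely different route.
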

\begin{proof}
The statement follows from the following form of the tensor $N$
for each of the basic classes $\F_i$ $(i=1,2,\dots,11)$ of $M=\M$:
\begin{equation}\label{N-1-11}
\begin{array}{ll}
N(x,y)=0,
\quad &M\in\F_1\oplus\F_2\oplus\F_4\oplus\F_5\oplus\F_6;\\
N(x,y)=2\left\{\left(\n_{\f
x}\f\right)y-\f\left(\n_{x}\f\right)y\right\},
\quad &M\in\F_3;\\
N(x,y)=4\left(\n_{x}\eta\right)y\cdot \xi ,
\quad &M\in\F_7;\\
N(x,y)=2\left\{\eta(x)\n_{y}\xi-\eta(y)\n_{x}\xi\right\},
\quad &M\in\F_8\oplus\F_9;\\
N(x,y)=-\eta(x)\f\left(\n_{\xi}\f\right)y+\eta(y)\f\left(\n_{\xi}\f\right)x,
\quad &M\in\F_{10};\\
N(x,y)=\left\{\eta(x)\om(\f y)-\eta(y)\om(\f x)\right\}, \quad
&M\in\F_{11}.
\end{array}
\end{equation}
The calculations are made using \eqref{NF} and \eqref{Fi}.
\end{proof}

\section{$\f$-Canonical Connection}

\begin{defn}%[\cite{Man31}]\label{defn-natural}
A linear connection $D$ is called a \emph{natural connection} on
the manifold  $(M,\f,\allowbreak\xi,\eta,g)$ if the almost contact
structure $(\f,\xi,\eta)$ and the B-metric $g$ are parallel with
respect to $D$,  i.e.  $D\f=D\xi=D\eta=Dg=0$.
\end{defn}
As a corollary, the associated metric $\tilde{g}$ is also parallel
with respect to a natural connection $D$ on $\M$.

According to \cite{ManIv36}, a necessary and sufficient condition
a linear connection $D$ to be natural on $\M$ is $D\f=Dg=0$.

If $T$ is the torsion of $D$, i.e. $T(x,y)=D_x y-D_y x-[x, y]$,
then the corresponding tensor of type (0,3) is determined by
$T(x,y,z)=g(T(x,y),z)$.

Let us denote the difference between the natural connection $D$
and the Levi-Civita connection $\n$ of $g$ by $Q(x,y)=D_xy-\n_xy$
and the corresponding tensor of type (0,3) --- by
$Q(x,y,z)=g\left(Q(x,y),z\right)$.

It is easy to establish  (see, e.g. \cite{Man31}) that
%\begin{prop}[\cite{Man31}]\label{prop-nat-con}
%
a linear connection $D$ is a natural connection on an almost contact B-metric manifold if and only if %
\begin{equation}\label{D-can}
 Q(x,y,\f z)-Q(x,\f y,z)=F(x,y,z),\qquad
 Q(x,y,z)=-Q(x,z,y).
\end{equation}
%\end{prop}
%
Therefore, according $T(x,y)=Q(x,y)-Q(y,x)$, we have the equality
of Hayden's theorem \cite{Hay}
\begin{equation*}\label{Hay}
Q(x,y,z)=\frac{1}{2}\left\{T(x,y,z)-T(y,z,x)+T(z,x,y)\right\}.
\end{equation*}

\begin{defn}\label{defn-canonical}
A natural connection $D$ is called a \emph{$\f$-canonical
connection} on the manifold $(M,\f,\xi,\allowbreak\eta,g)$ if the
torsion tensor $T$ of $D$ satisfies the following identity:
\begin{equation}\label{T-can}
\begin{split}
    &\bigl\{T(x,y,z)-T(x,\f y,\f z)
    -\eta(x)\left\{T(\xi,y,z)
    -T(\xi, \f y,\f z)\right\}\\
    &-\eta(y)\left\{T(x,\xi,z)-T(x,z,\xi)-\eta(x)T(z,\xi,\xi)\right\}\bigr\}_{[y\leftrightarrow z]}=0.
\end{split}
\end{equation}
\end{defn}

Let us remark that the restriction of the $\f$-canonical
connection $D$ of the manifold $\M$ on the contact distribution
$H$ is the unique canonical connection of the corresponding almost
complex manifold with Norden metric, studied in \cite{GaMi87}.

In \cite{ManGri2}, it is introduced a natural connection on $\M$,
defined by
\begin{equation}\label{fB}
    \n^0_xy=\n_xy+Q^0(x,y),
\end{equation}
where    $Q^0(x,y)=\frac{1}{2}\bigl\{\left(\n_x\f\right)\f
y+\left(\n_x\eta\right)y\cdot\xi\bigr\}-\eta(y)\n_x\xi$.
Therefore, we have
\begin{equation}\label{Q0}
Q^0(x,y,z)=\frac{1}{2}\left\{F(x,\f y,z)+\eta(z)F(x,\f
y,\xi)-2\eta(y)F(x,\f z,\xi)\right\}.
\end{equation}
The torsion of the $\f$B-connection has the following form
\begin{equation}\label{T0}
\begin{split}
T^0(x,y,z)=\frac{1}{2}\bigl\{&F(x,\f y,z)+\eta(z)F(x,\f y,\xi)
+2\eta(x)F(y,\f z,\xi)\bigr\}_{[x\leftrightarrow y]}.
\end{split}
\end{equation}

In \cite{ManIv37}, the connection determined by \eqref{fB} is
called a \emph{$\f$B-connection}. It is studied for some classes
of $\M$ in \cite{ManGri2,Man3,Man4,ManIv37}. The restriction of
the $\f$B-connection on $H$ coincides with the B-connection on an
almost complex manifold with Norden metric, studied for the class
of the locally conformal K\"ahlerian manifolds with Norden metric
in \cite{GaGrMi}.

We construct a linear connection $\n'$ as follows:
\begin{equation}\label{D=nQ}
g(\n'_xy,z)=g(\n_xy,z)+Q'(x,y,z),
\end{equation}
where
\begin{equation}\label{Q-can-F}
\begin{split}
Q'(x,y,z)%=&\\
=Q^0(x,y,z)-\frac{1}{8}\left\{N(\f^2 z,\f^2 y,\f^2 x)+2N(\f z,\f
y,\xi)\eta(x)\right\}.
\end{split}
\end{equation}

By direct computations, we check that $\n'$ satisfies conditions
\eqref{D-can} and therefore it is a natural connection on $\M$.
Its torsion is
\begin{equation}\label{T-can-F}
\begin{split}
%T'(x,y,z)%=&\\
%&=\frac{1}{2}\bigl\{F(x,\f y,z)-F(y,\f x,z)\\
%&+\eta(z)\left[F(x,\f y,\xi)-F(y,\f
%x,\xi)\right]\bigr\}\\
%&+\eta(x)F(y,\f z,\xi)-\eta(y)F(x,\f z,\xi)\\
%&-\frac{1}{8}\left[N(\f^2 z,\f y,\f x)-N(\f^2 z,\f x,\f
%y)\right]\\
%&-\frac{1}{4}\left[\eta(x)N(\f z,\f y,\xi)-\eta(y)N(\f z,\f
%x,\xi)\right].\\
T'(x,y,z)&=T^0(x,y,z)-\frac{1}{8}\left\{N(\f^2 z,\f^2 y,\f^2
x)+2N(\f z,\f y,\xi)\eta(x)\right\}_{[x\leftrightarrow y]}.
\end{split}
\end{equation}

We verify immediately that $T'$ satisfies \eqref{T-can} and thus
$\n'$, determined by \eqref{D=nQ} and \eqref{Q-can-F}, is a
$\f$-canonical connection on $\M$.

The explicit expression \eqref{D=nQ}, supported by \eqref{Q0} and
\eqref{NF}, of the $\f$-canonical connection by the tensor $F$
implies that the $\f$-canonical connection is unique.

Immediately we get the following
\begin{prop}\label{prop-Q=Q0}
A necessary and sufficient condition the $\f$-canonical connection
to
coincide with the $\f$B-connection %(,  i.e.  $Q=Q_0$,
is $N(\f\cdot,\f\cdot)=0$.
\end{prop}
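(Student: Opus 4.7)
The plan is to reduce the question about the connections to one about their torsions, and then test the correction term in \eqref{T-can-F} against the class characterization given in \lemref{lem-Nfifi}. Since $\n'$ and $\n^0$ are both natural connections, they coincide if and only if $Q'=Q^0$, which by Hayden's formula is equivalent to $T'=T^0$. By \eqref{T-can-F}, this happens if and only if
\begin{equation*}
\bigl\{N(\f^2 z,\f^2 y,\f^2 x)+2N(\f z,\f y,\xi)\eta(x)\bigr\}_{[x\leftrightarrow y]}=0
\end{equation*}
for all $x,y,z$. So the whole proposition comes down to showing that this identity is equivalent to $N(\f\cdot,\f\cdot)=0$.

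For the sufficiency, I would simply substitute $a=\f z$, $b=\f y$ in the hypothesis $N(\f a,\f b,\cdot)=0$. This instantly gives $N(\f^2 z,\f^2 y,w)=0$ for every $w$, so in particular the first summand vanishes; and $N(\f z,\f y,\xi)=0$ kills the second summand. Hence the correction term is zero and $\n'=\n^0$.

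For the necessity I would first specialize $x=\xi$ in the above vanishing identity. Since $\f\xi=0$ and $\f^2\xi=0$, every term containing $\f^2 x$ or $\f x$ drops out, leaving $2N(\f z,\f y,\xi)=0$. Feeding this back into the identity reduces it to $N(\f^2 z,\f^2 y,\f^2 x)=N(\f^2 z,\f^2 x,\f^2 y)$, that is, a symmetry of $N$ in the last two slots whenever all three arguments lie in $H=\Im(\f^2)$ (using $\f^2 u=-u$ on $H$).

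The main obstacle (really the only step needing a short argument) is turning this partial symmetry into full vanishing on $H$. Here I would use the built-in antisymmetry $N(u,v,w)=-N(v,u,w)$ together with the newly obtained symmetry $N(u,v,w)=N(u,w,v)$ for $u,v,w\in H$. A short chain of substitutions
\begin{equation*}
N(u,v,w)=N(u,w,v)=-N(w,u,v)=-N(w,v,u)=N(v,w,u)=N(v,u,w)=-N(u,v,w)
\end{equation*}
yields $N(u,v,w)=0$ on $H\times H\times H$. Combined with $N(\f z,\f y,\xi)=0$ (which already covers the $\xi$-direction in the third slot, since $\f z,\f y$ exhaust $H$), this gives $N(u,v,\cdot)=0$ for all $u,v\in H$, i.e.\ $N(\f\cdot,\f\cdot)=0$. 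This completes the equivalence.
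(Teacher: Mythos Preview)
Your argument is correct. The paper records \propref{prop-Q=Q0} as an immediate consequence and gives no details, but the shortest path it has in mind is through \eqref{Q-can-F} rather than \eqref{T-can-F}: since $\n'=\n^0$ is literally the condition $Q'=Q^0$, one obtains directly
\[
N(\f^2 z,\f^2 y,\f^2 x)+2N(\f z,\f y,\xi)\eta(x)=0
\]
for all $x,y,z$, with no antisymmetrization in $x,y$. Setting $x=\xi$ still yields $N(\f z,\f y,\xi)=0$, and feeding this back now gives $N(\f^2 z,\f^2 y,\f^2 x)=0$ outright rather than merely a symmetry in the last two slots; the permutation chain then becomes unnecessary. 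Your detour through $T'=T^0$ (via Hayden) and the six-step cycle is perfectly valid, and is a nice instance of the ``antisymmetric in one pair, symmetric in an overlapping pair, hence zero'' trick, but it reconstructs by hand precisely what \eqref{Q-can-F} hands you for free.
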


Thus, \propref{prop-Q=Q0} and \lemref{lem-Nfifi} imply
\begin{cor}\label{cor-Q=Q0-class}
The $\f$-canonical connection and the $\f$B-connection coincide on
an almost con\-tact B-metric manifold $\M$ if and only if $\M$ is
in the class $\U_0$.
\end{cor}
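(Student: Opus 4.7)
The proof is essentially a one-line chaining of the two preceding results. My plan is first to apply \propref{prop-Q=Q0}, which gives the equivalence between coincidence of the two connections and the condition $N(\f\cdot,\f\cdot)=0$ on the Nijenhuis tensor, and then to apply \lemref{lem-Nfifi}, which characterizes the class $\U_0=\F_1\oplus\F_2\oplus\F_4\oplus\F_5\oplus\F_6\oplus\F_8\oplus\F_9\oplus\F_{10}\oplus\F_{11}$ exactly by the same condition $N(\f\cdot,\f\cdot)=0$. Composing the two "if and only if" statements yields the corollary directly.

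If I wanted to reprove this without quoting \propref{prop-Q=Q0}, I would instead read off from \eqref{Q-can-F} that $\n'$ and $\n^0$ coincide iff the correction tensor $N(\f^2 z,\f^2 y,\f^2 x)+2N(\f z,\f y,\xi)\eta(x)$ vanishes identically. The second summand vanishes trivially when $N(\f x,\f y)=0$. For the first summand, I would use $\f^2=-\Id+\eta\otimes\xi$ together with the identities in \lemref{lem-N} (and the antisymmetry $N(x,y)=-N(y,x)$) to rewrite $N(\f^2 z,\f^2 y,\f^2 x)$ as a combination of terms each of which factors through a pair of $\f$-images, so that the vanishing $N(\f\cdot,\f\cdot)=0$ forces the whole expression to be zero; conversely, restricting the identity to the subbundle $H=\ker\eta$ would recover $N(\f\cdot,\f\cdot)=0$.

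There is no real obstacle: both implications have already been packaged in the preceding statements, and the only conceivable technical hiccup — reducing the $\f^2$-arguments in \eqref{Q-can-F} to $\f$-arguments — is handled uniformly by \lemref{lem-N}. I would therefore keep the proof as a two-sentence deduction, citing \propref{prop-Q=Q0} and \lemref{lem-Nfifi}, rather than expanding the intermediate algebra.
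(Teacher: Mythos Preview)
Your proposal is correct and matches the paper's own argument exactly: the corollary is obtained immediately by combining \propref{prop-Q=Q0} with \lemref{lem-Nfifi}. The additional remarks about how to recover the result directly from \eqref{Q-can-F} and \lemref{lem-N} are accurate but unnecessary for the proof itself.
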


In \cite{ManIv36}, it is given a classification of the linear
connections on the almost contact B-metric manifolds with respect
to their torsion tensors $T$ in 11 classes $\T_{ij}$. The
characteristic conditions of these basic classes are the
following:
\begin{equation*}\label{Tij}
\begin{split}
\T_{11/12}:\quad
    &T(\xi,y,z)=T(x,y,\xi)=0,\quad %\\
    T(x,y,z)=-T(\f x,\f y,z)=\mp T(x,\f y,\f z);\\
\T_{13}:\quad &T(\xi,y,z)=T(x,y,\xi)=0,\quad %\\
            T(x,y,z)-T(\f x,\f y,z)=\sx T(x,y,z)=0;\\
\T_{14}:\quad &T(\xi,y,z)=T(x,y,\xi)=0,\quad %\\
            T(x,y,z)-T(\f x,\f y,z)=\sx T(\f x,y,z)=0;\\
\T_{21/22}:\quad &T(x,y,z)=\eta(z)T(\f^2 x,\f^2 y,\xi),\quad
                T(x,y,\xi)=\mp T(\f x,\f y,\xi);\\
\T_{31/32}:\quad &T(x,y,z)=\eta(x)T(\xi,\f^2 y,\f^2 z)-\eta(y)T(\xi,\f^2 x,\f^2 z),\quad \\
                &T(\xi,y,z)=\pm T(\xi,z,y)=-T(\xi,\f y,\f z); \\
\T_{33/34}:\quad &T(x,y,z)=\eta(x)T(\xi,\f^2 y,\f^2 z)-\eta(y)T(\xi,\f^2 x,\f^2 z),\\
                &T(\xi,y,z)=\pm T(\xi,z,y)=T(\xi,\f y,\f z); \\
\T_{41}:\quad
&T(x,y,z)=\eta(z)\left\{\eta(y)\hat{t}(x)-\eta(x)\hat{t}(y)\right\}.
\end{split}
\end{equation*}

\section{General Contactly Conformal Group $G$}

In this section we consider the group of transformations of the
$\f$-canonical connection  generated by the general contactly
conformal transformations of the almost contact B-metric
structure.

Let $\M$ be an almost contact B-metric manifold. The general
contactly conformal transformations of the almost contact B-metric
structure are defined by
\begin{equation}\label{Transf}
\begin{array}{c}
    \bar{\xi}=e^{-w}\xi,\quad \bar{\eta}=e^{w}\eta,\quad %\\
    \bar{g}(x,y)=\al g(x,y)+\bt g(x,\f y)+(\gm-\al)\eta(x)\eta(y),
\end{array}
\end{equation}
where $\al=e^{2u}\cos{2v}$, $\bt=e^{2u}\sin{2v}$, $\gm=e^{2w}$ for
differentiable functions $u$, $v$, $w$ on $M$ \cite{Man4}. These
transformations form a group denoted by $G$.

If $w=0$, we obtain the contactly conformal transformations of the
B-metric, introduced in \cite{ManGri1}. By $v=w=0$, the
transformations \eqref{Transf} are reduced to the usual conformal
transformations of $g$.

Let us remark that $G$ can be considered as a complex conformal
gauge group, i.e. the composition of an almost contact group,
preserving $H$ and a complex conformal transformation of the
complex Riemannian metric $\overline{g^{\C}}=e^{2(u+iv)}g^{\C}$ on
$H$.

Let $\M$ and $(M,\f,\bar{\xi},\bar{\eta},\bar{g})$ be contactly
conformally equivalent with respect to a transformation from $G$.
The Levi-Civita connection of $\bar{g}$ is denoted by
 $\bar{\n}$.
Using the formula
\[
\begin{split}
    2g(\n_xy,z)=xg(y,z)+yg(x,z)-zg(x,y)%\\
               %&\phantom{=}
               +g([x,y],z)+g([z,x],y)+g([z,y],x),
\end{split}
\]
by straightforward computations we get the following relation
%\begin{lem}\label{lem-barNabla}
between  $\n$ and $\bar{\n}$%, known from \cite{Man-diss}
:
\begin{equation}\label{bar-n-n}
\begin{split}
    &2\left(\al^2+\bt^2\right)g\left(\bar{\n}_xy-\n_xy,z\right)=\\
    &=\frac{1}{2}\Bigl\{-\al\bt \left[2F(x,y,\f^2z)-F(\f^2z,x,y)\right]-\bt^2 \left[2F(x,y,\f z)-F(\f z,x,y)\right]\\
    &\phantom{=\frac{1}{2}\Bigl\{}+\frac{\bt}{\gm}\left(\al^2+\bt^2\right)
    \left[2F(x,y,\xi)-F(\xi,x,y)\right]\eta(z)\\
    &\phantom{=\frac{1}{2}\Bigl\{}+2\left(\frac{\al}{\gm}-1\right)\left(\al^2+\bt^2\right)
    F(\f^2x,\f y,\xi)\eta(z)\\
    &\phantom{=\frac{1}{2}\Bigl\{}+2\al(\gm-\al)
    \left[F(x,\f z,\xi)+F(\f^2z,\f x,\xi)\right]\eta(y)\\
    &\phantom{=\frac{1}{2}\Bigl\{}-2\bt(\gm-\al)
    \left[F(x,\f^2z,\xi)-F(\f z,\f x,\xi)\right]\eta(y)\\
    &\phantom{=\frac{1}{2}\Bigl\{}-2\left[\al\D\al(x)+\bt\D\bt(x)\right]g(\f y,\f z)+2\left[\al\D\bt(x)-\bt\D\al(x)\right]g(y,\f
    z)\\
    &\phantom{=\frac{1}{2}\Bigl\{}-\left[\al\D\al(\f^2z)+\bt\D\al(\f z)\right]g(\f x,\f y)+\left[\al\D\bt(\f^2z)+\bt\D\bt(\f z)\right]g(x,\f
    y)\\
    &\phantom{=\frac{1}{2}\Bigl\{}+\left[\al\D\gm(\f^2z)+\bt\D\gm(\f z)\right]\eta(x)\eta(y)\\
    &\phantom{=\frac{1}{2}\Bigl\{}+\frac{1}{\gm}(\al^2+\bt^2)\bigl\{\D\al(\xi)g(\f x,\f y)-\D\bt(\xi)g(x,\f y)\bigr\}\eta(z)\\
    &\phantom{=\frac{1}{2}\Bigl\{}+\frac{1}{\gm}(\al^2+\bt^2)\bigl\{2\D\gm(x)\eta(y)-\D\gm(\xi)\eta(x)\eta(y)\bigr\}\eta(z)\Bigr\}_{(x\leftrightarrow
    y)}.
\end{split}
\end{equation}

Using \eqref{F=nfi} and \eqref{bar-n-n}, we obtain the following
formula for the  transformation by $G$ of the tensor $F$:
\begin{equation}\label{barF-F}
\begin{split}
    &2\bar{F}(x,y,z)=2\al F(x,y,z)+\Bigl\{\bt \left\{F(\f y,z,x)-F(y,\f z,x)+F(x,\f y,\xi)\eta(z)\right\}\\
    &\phantom{+\Bigl\{}+(\gm-\al)\bigl\{\left[F(x,y,\xi)+F(\f y,\f x,\xi)\right]\eta(z)%\\
    %&\phantom{+\Bigl\{+(\gm-\al)\bigl\{}
    +\left[F(y,z,\xi)+F(\f z,\f y,\xi)\right]\eta(x)\bigr\}\\
    &\phantom{+\Bigl\{}-\left[\D\al(\f y)+\D\bt(y)\right]g(\f x,\f z)-\left[\D\al(y)-\D\bt(\f y)\right]g(x,\f
    z)\\
    &\phantom{+\Bigl\{}+\eta(x)\eta(y)\D\gm(\f z)\Bigr\}_{(y\leftrightarrow
    z)}.
\end{split}
\end{equation}

\begin{thm}
The tensor $N(\f\cdot,\f\cdot)$ is an invariant of the group $G$
on any almost contact B-metric manifold.
\end{thm}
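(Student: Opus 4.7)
The plan is to bypass the covariant expression \eqref{NF} and work directly from the bracket definition
\[
N = [\f,\f] + \D\eta\otimes\xi.
\]
The key observation is that every element of $G$ leaves $\f$ untouched — only $\xi$, $\eta$, $g$ are rescaled according to \eqref{Transf} — so the $\f$-Nijenhuis piece $[\f,\f]$ is $G$-invariant for free, and the whole task reduces to controlling the second summand $\D\eta\otimes\xi$.

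First I would note the algebraic identity $\bar\eta\otimes\bar\xi = (e^w\eta)\otimes(e^{-w}\xi) = \eta\otimes\xi$, so the $\xi$-factor is invariant and only $\D$ can create new terms. Applying the Leibniz rule for the exterior derivative gives
\[
\D\bar\eta = \D(e^w\eta) = e^w\,\D w\wedge\eta + e^w\,\D\eta,
\]
so that, after tensoring with $\bar\xi=e^{-w}\xi$,
\[
\bar N = [\f,\f] + \D\bar\eta\otimes\bar\xi = N + (\D w\wedge\eta)\otimes\xi.
\]

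The concluding step is a one-line check: evaluating on $(\f x,\f y)$ annihilates the extra term, since $\eta\circ\f=0$ by \eqref{str1} yields
\[
(\D w\wedge\eta)(\f x,\f y) = \D w(\f x)\,\eta(\f y) - \D w(\f y)\,\eta(\f x) = 0.
\]
Hence $\bar N(\f x,\f y) = N(\f x,\f y)$ as $(1,2)$-tensors, which is the asserted $G$-invariance of $N(\f\cdot,\f\cdot)$.

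There is essentially no obstacle; the only strategic decision is to avoid using \eqref{NF} together with the lengthy conformal formula \eqref{barF-F}, which would force a tedious case-by-case cancellation of the many terms in \eqref{barF-F} after the substitutions $x\mapsto\f x$, $y\mapsto\f y$, using $\eta\circ\f=0$, $\f\xi=0$, and the properties \eqref{F-prop} of $F$ — in effect re-proving $N=[\f,\f]+\D\eta\otimes\xi$ coordinatewise. The intrinsic bracket form isolates the single offending term $(\D w\wedge\eta)\otimes\xi$ and kills it at once through the horizontality of $\f$.
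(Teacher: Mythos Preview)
Your argument is correct and takes a genuinely different, more intrinsic route than the paper. The paper works with the $(0,3)$-tensor $N(x,y,z)=g(N(x,y),z)$ via its expression \eqref{NF} in terms of $F$, feeds in the lengthy transformation law \eqref{barF-F} for $\bar F$, and after cancellations arrives at
\[
\bar N(\f x,\f y,z)=\al\, N(\f x,\f y,z)+\bt\, N(\f x,\f y,\f z)+(\gm-\al)\, N(\f x,\f y,\xi)\,\eta(z),
\]
which, compared with the expansion of $\bar g(N(\f x,\f y),z)$ from \eqref{Transf}, forces $\bar N(\f x,\f y)=N(\f x,\f y)$. Your approach instead exploits that $\bar\f=\f$, so $[\f,\f]$ is untouched, and isolates the single correction term $(\D w\wedge\eta)\otimes\xi$, which $\eta\circ\f=0$ kills on horizontal arguments. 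This is shorter and makes transparent exactly which part of the transformation (only the rescaling of $\eta$ by $e^{w}$) interacts with $N$. The paper's computation has one collateral benefit: the intermediate $(0,3)$-formula \eqref{barN-N} is reused later in the proof of \propref{prop:bar-n'-n'}; but that formula follows in one line from your invariance statement by applying $\bar g$ from \eqref{Transf}, so nothing is lost.
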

\begin{proof}
From \eqref{NF}, \eqref{barF-F}, \eqref{F-prop} and
\eqref{Transf}, it follows the formula for the transformation by
$G$ of the Nijenhuis tensor:
\begin{equation}\label{barN-N}
\begin{split}
    \bar{N}(\f x,\f y,z)=\al N(\f x,\f y,z)&+\bt N(\f x,\f y,\f z)+(\gm-\al) N(\f x,\f
    y,\xi)\eta(z).
\end{split}
\end{equation}
Thus, bearing in mind \eqref{Transf}, we obtain immediately
$\bar{N}(\f x,\f y)=N(\f x,\f y)$.
\end{proof}

According to \lemref{lem-Nfifi}, we establish immediately the
following
\begin{cor}
The class $\U_0$ is closed by the action of the group $G$.
\end{cor}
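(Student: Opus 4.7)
The plan is to combine the two preceding results: \lemref{lem-Nfifi}, which characterizes the class $\U_0$ intrinsically by the vanishing of the tensor $N(\f\cdot,\f\cdot)$, and the theorem just proved, which asserts that this tensor is pointwise invariant under the action of the group $G$. Taken together, the two statements should give the closedness of $\U_0$ under $G$ essentially for free.

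More precisely, I would start by letting $\M$ belong to $\U_0$ and considering an arbitrary transformation in $G$ yielding $(M,\f,\bar\xi,\bar\eta,\bar g)$. The transformed structure is still an almost contact B-metric structure (with the same $\f$), so \lemref{lem-Nfifi} applies verbatim to it: membership in $\U_0$ for the transformed manifold is equivalent to $\bar N(\f\cdot,\f\cdot)=0$, where $\bar N$ is the Nijenhuis tensor computed from $(\f,\bar\xi,\bar\eta)$. By the hypothesis $\M\in\U_0$ and the same lemma, $N(\f\cdot,\f\cdot)=0$. The theorem just proved, via \eqref{barN-N} and the substitution $z\mapsto\f z'$, yields $\bar N(\f\cdot,\f\cdot)=N(\f\cdot,\f\cdot)$, so the transformed tensor also vanishes and the transformed manifold lies in $\U_0$.

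There is no substantive obstacle here; the only point that requires mild care is to verify that the invariance established in the theorem, which is written in the form $\bar N(\f x,\f y,z)=\ldots$ with a trailing $\eta(z)$ term, actually delivers the vector-valued identity $\bar N(\f x,\f y)=N(\f x,\f y)$ when the $\eta(z)$ contribution is folded back through $\bar\eta=e^{w}\eta$. This is precisely the last line of the proof of the theorem, so no new calculation is needed and the corollary follows at once.
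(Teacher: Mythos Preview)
Your proposal is correct and follows exactly the paper's approach: the paper simply states that the corollary is established immediately from \lemref{lem-Nfifi} together with the preceding theorem, which is precisely the combination you describe. The mild care you mention about the $\eta(z)$ term is already handled by the last line of the theorem's proof, so your expanded argument matches the paper's terse justification.
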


\begin{prop}\label{prop:bar-n'-n'}
Let  the almost contact B-metric manifolds $\M$ and
$(M,\f,\bar{\xi},\bar{\eta},\bar{g})$ be contactly conformally
equivalent with respect to a transformation from $G$. Then the
corresponding $\f$-canonical connections $\bar{\n}'$ and $\n'$ as
well as their torsions $\bar{T}'$ and  $T'$ are related as
follows:
\begin{equation}\label{bar-n'-n'}
\begin{split}
    \bar{\n}'_xy&=\n'_xy
    -\D u(x)\f^2 y+\D v(x)\f y+\D w(x)\eta(y)\xi\\
    &+\frac{1}{2}\bigl\{\left[\D u(\f^2y)-\D v(\f y)\right]\f^2 x-\left[\D u(\f y)+\D v(\f^2 y)\right]\f
    x\\
    &\phantom{+\frac{1}{2}\bigl\{}-g(\f x,\f y)\left[\f^2p-\f q\right]+g(x,\f y)\left[\f p+\f^2
    q\right]\bigr\},
\end{split}
\end{equation}
\begin{equation}\label{T'TP}
\begin{split}
    \bar{T}'(x,y)&=T'(x,y)
    +\frac{1}{2}\bigl\{2\D w(x)\eta(y)\xi\\
    &\phantom{=T'(x,y)+\frac{1}{2}\bigl\{}
        +\left[\D u(\f^2x)+\D v(\f x)-2\D u(\xi)\eta(x)\right]\f^2 y\\
    &\phantom{=T'(x,y)+\frac{1}{2}\bigl\{}
    +\left[\D u(\f x)-\D v(\f^2 x)+2\D v(\xi)\eta(x)\right]\f y\bigr\}_{[x\leftrightarrow
    y]},
\end{split}
\end{equation}
where $p=\grad{u}$, $q=\grad{v}$.
\end{prop}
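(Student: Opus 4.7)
The plan is to derive \eqref{bar-n'-n'} by a direct but lengthy computation from the transformation laws already established, and then to obtain \eqref{T'TP} from it by antisymmetrization in $x\leftrightarrow y$. Writing the defining identity $g(\n'_x y,z)=g(\n_x y,z)+Q'(x,y,z)$ together with its barred analogue, and letting $(Q')^\sharp(x,y)$ denote the vector obtained by raising the last index of $Q'$ via $g$, I have
\[
\bar{g}\bigl(\bar{\n}'_x y-\n'_x y,z\bigr)=\bar{g}\bigl(\bar{\n}_x y-\n_x y,z\bigr)+\bar{Q}'(x,y,z)-\bar{g}\bigl((Q')^\sharp(x,y),z\bigr).
\]
The first term on the right is supplied by \eqref{bar-n-n}, while the remaining two are to be computed by substituting \eqref{barF-F} and \eqref{barN-N} into the defining formulas \eqref{Q0}, \eqref{Q-can-F} of $\bar{Q}'$. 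The invariance $\bar{N}(\f\cdot,\f\cdot)=N(\f\cdot,\f\cdot)$ from the preceding theorem ensures that the vector part of the $N$-correction is unchanged; only the $\bar{g}$-raising of the last index produces genuinely new terms.

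Next, expand $\bar{Q}'(x,y,z)-\bar{g}((Q')^\sharp(x,y),z)$ by means of \eqref{barF-F} together with the scalar identities $\D\al=2(\al\,\D u-\bt\,\D v)$, $\D\bt=2(\al\,\D v+\bt\,\D u)$, $\D\gm=2\gm\,\D w$ and the algebraic relations \eqref{str1}, \eqref{str2}, \eqref{F-prop}, \eqref{tata*}. Adding the result to \eqref{bar-n-n}, every $F$-dependent monomial is expected to cancel: the structural reason is that $\n'$ is a natural connection for both $\M$ and $(M,\f,\bar{\xi},\bar{\eta},\bar{g})$, so the ``$F$-noise'' introduced by the Levi-Civita term of \eqref{bar-n-n} must be exactly compensated by the corresponding $F$-noise in $\bar{Q}'-(Q')^\sharp$. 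What survives is a polynomial in $\D u,\D v,\D w,p,q,\f,\xi,\eta$, which after a final $\bar{g}$-raising of the free index $z$ via \eqref{Transf} reassembles into the right-hand side of \eqref{bar-n'-n'}.

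For \eqref{T'TP}, I use that the Lie bracket is independent of the structure, so
\[
\bar{T}'(x,y)-T'(x,y)=\bigl(\bar{\n}'_x y-\n'_x y\bigr)-\bigl(\bar{\n}'_y x-\n'_y x\bigr)
\]
is the antisymmetrization of \eqref{bar-n'-n'} in $x\leftrightarrow y$. The two terms $-\tfrac12 g(\f x,\f y)(\f^2 p-\f q)$ and $\tfrac12 g(x,\f y)(\f p+\f^2 q)$ are symmetric in $x\leftrightarrow y$, since $g(\f x,\f y)=g(\f y,\f x)$ and $g(x,\f y)=g(\f x,y)=g(y,\f x)$ by \eqref{str2} and the symmetry of $g$, and therefore drop out. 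Reorganizing what remains by means of $\f^2 x=-x+\eta(x)\xi$, which yields $\D u(\f^2 x)=-\D u(x)+\D u(\xi)\eta(x)$ and analogously for $\D v$, reproduces precisely the right-hand side of \eqref{T'TP}.

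The main obstacle is the cascade of cancellations at the end of the second paragraph. The formulas \eqref{bar-n-n}, \eqref{barF-F}, \eqref{barN-N} each contribute numerous $\al$-, $\bt$-, $(\gm-\al)$-weighted $F$-terms, and coordinating them so that every $F$-contribution vanishes while the clean combination of $\D u,\D v,\D w,p,q$ in \eqref{bar-n'-n'} emerges is the tedious but essentially mechanical core of the proof.
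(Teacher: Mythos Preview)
Your approach is essentially the paper's: combine the Levi-Civita transformation \eqref{bar-n-n} with the transformations \eqref{barF-F} and \eqref{barN-N} of $F$ and $N$ to obtain \eqref{bar-n'-n'}, and then antisymmetrize for \eqref{T'TP}. The only organizational difference is that the paper routes the computation through the $\f$B-connection $\n^0$, first deriving the intermediate formula \eqref{bar-n0-n0} for $\bar{\n}^0-\n^0$ (expressed via $g$, not $\bar g$) and only then adding the $N$-correction from \eqref{Q-can-F}; this separation makes the cancellations somewhat more transparent than handling all of $Q'$ at once. One small caveat: your heuristic that ``$\n'$ is a natural connection for both structures'' is not literally true ($\n'\bar g\neq 0$ because $\al,\bt,\gm$ are nonconstant), so the disappearance of the $F$-terms really does rest on the explicit computation rather than on that structural remark.
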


\begin{proof}
Taking into account \eqref{fB}, we have the following equality on
$\M$:
\begin{equation}\label{5}
\begin{split}
&g\left(\n^{0}_xy-{\n}_xy,z\right)%=\\
=\frac{1}{2}\left\{{F}(x,\f y,z)+{F}(x,\f y,\xi)\eta(z)-2{F}(x,\f
z,\xi)\eta(y)\right\}.
\end{split}
\end{equation}
Then we can rewrite the corresponding equality on the manifold
$(M,\f,\bar{\xi},\bar{\eta},\bar{g})$, which is the image of $\M$
by a transformation from $G$:
\begin{equation}\label{6}
\begin{split}
&\bar{g}\left(\bar{\n}^{0}_xy-\bar{\n}_xy,z\right)%=\\
=\frac{1}{2}\left\{\bar{F}(x,\f y,z)+\bar{F}(x,\f
y,\bar{\xi})\bar{\eta}(z)-2\bar{F}(x,\f
z,\bar{\xi})\bar{\eta}(y)\right\}.
\end{split}
\end{equation}

By virtue of \eqref{5}, \eqref{6}, \eqref{barF-F},
\eqref{bar-n-n}, we get the following formula of the
transformation by $G$ of the $\f$B-connection:
\begin{equation}\label{bar-n0-n0}
\begin{split}
    &g\left(\bar{\n}^0_xy-\n^0_xy,z\right)=%\\
    \frac{1}{8}\sin{4v} N(\f z,\f  y,\f x)-\frac{1}{4}\sin^2{2v} N(\f^2z,\f^2 y,\f^2 x)\\
    &-\frac{1}{4}e^{2(w-u)}\sin{2v}N(\f^2 z,\f y,\xi)\eta(x)%\\
    -\frac{1}{4}\left(1-e^{2(w-u)}\cos{2v}\right)N(\f z,\f y,\xi)\eta(x)\\
    &-\D u(x)g(\f y,\f z)+\D v(x)g(y,\f z)+\D w(x)\eta(y)\eta(z)\\
    &+\frac{1}{2}\left[\D u(\f^2y)-\D v(\f y)\right]g(\f x,\f z)-\frac{1}{2}\left[\D u(\f y)+\D v(\f^2 y)\right]g( x,\f z)\\
    &-\frac{1}{2}\left[\D u(\f^2z)-\D v(\f z)\right]g(\f x,\f y)+\frac{1}{2}\left[\D u(\f z)+\D v(\f^2 z)\right]g( x,\f
    y).
\end{split}
\end{equation}

Taking into account \eqref{Q-can-F}, \eqref{barN-N},
\eqref{Transf} and \eqref{bar-n0-n0}, we get \eqref{bar-n'-n'}.
As a consequence of \eqref{bar-n'-n'}, the torsions $T'$ and
$\bar{T}'$ of  $\n'$ and $\bar{\n}'$, respectively, are related as
in \eqref{T'TP}.
\end{proof}

The torsion forms associated with $T'$ of the $\f$-canonical
connection are defined, in a similar way of \eqref{titi}, as
follows:
\begin{equation}\label{t}
\begin{array}{c}
t'(x)=g^{ij}T'(x,e_i,e_j),\quad t'^*(x)=g^{ij}T'(x,e_i,\f e_j),\quad %\\
\hat{t}'(x)=T'(x,\xi,\xi).
\end{array}
\end{equation}
Obviously, $\hat{t}(\xi)=0$ is always valid.

Using \eqref{t}, \eqref{T-can-F}, \eqref{T0}, \eqref{F-prop} and
\lemref{lem-N}, we obtain that the torsion forms of the
$\f$-canonical connection are expressed by the associated forms
with $F$:
\begin{equation}\label{TD-sledi}
\begin{array}{c}
t'=\frac{1}{2}\bigl\{\ta^*+\ta^*(\xi)\eta\bigr\},\quad
t'^*=-\frac{1}{2}\bigl\{\ta+\ta(\xi)\eta\bigr\},\quad%\[4pt]
\hat{t}'=-\om\circ\f.
\end{array}
\end{equation}

The equality \eqref{tata*} and \eqref{TD-sledi} imply the
following relation:
\begin{equation}\label{t't'*}
t'^*\circ\f=-t'\circ\f^2.
\end{equation}

\section{General Contactly Conformal Subgroup $G_0$}

Let us consider the subgroup $G_0$ of $G$ defined by the
conditions
\begin{equation}\label{G0}
    \D u\circ\f^2+\D v\circ\f=\D u\circ\f -\D v\circ\f^2 y=\D u(\xi)=\D
    v(\xi)=\D w\circ\f=0.
\end{equation}
By direct computations, from  \eqref{Fi}, \eqref{Transf},
\eqref{barF-F} and \eqref{G0}, we prove that each of the basic
classes $\F_i$ $(i=1,2,\dots,11)$ of the almost contact B-metric
manifolds
 is closed by the action of the group $G_0$. Moreover, $G_0$ is
the largest subgroup of $G$ preserving the 1-forms $\ta$, $\ta^*$,
$\om$ and the special class $\F_0$.
%In \cite{Man4}, it is proved
%that the Bochner-type curvature tensors for $\n$ and for $\n^0$ on
%a manifold from $\F_0$ are invariant with respect to the
%transformations of the group $G_0$.

\begin{thm}
The torsion of the $\f$-canonical connection is invariant with
respect to the general contactly conformal transformations if and
only if these transformations belong to the group $G_0$.
\end{thm}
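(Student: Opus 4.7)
The plan is to start from formula \eqref{T'TP} of \propref{prop:bar-n'-n'}, which gives the explicit obstruction
\begin{equation*}
\bar{T}'(x,y)-T'(x,y)=\tfrac{1}{2}\bigl\{A(x,y)\bigr\}_{[x\leftrightarrow y]},\quad A(x,y)=2\D w(x)\eta(y)\xi+P(x)\f^2 y+Q(x)\f y,
\end{equation*}
where $P(x)=\D u(\f^2 x)+\D v(\f x)-2\D u(\xi)\eta(x)$ and $Q(x)=\D u(\f x)-\D v(\f^2 x)+2\D v(\xi)\eta(x)$. The theorem then reduces to showing that $A$ is symmetric in $x$, $y$ if and only if the five conditions \eqref{G0} hold.

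For sufficiency I would plug \eqref{G0} directly into the formulas above: the pair $\D u(\xi)=0$ and $\D u\circ\f^2+\D v\circ\f=0$ kills $P$; symmetrically $\D v(\xi)=0$ and $\D u\circ\f-\D v\circ\f^2=0$ kills $Q$; finally $\D w\circ\f=0$ together with $\eta(\xi)=1$ forces $\D w=\D w(\xi)\,\eta$, so the term $\D w(x)\eta(y)\xi$ is symmetric in $x$, $y$. Hence $\bar{T}'=T'$.

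For necessity I assume $A(x,y)=A(y,x)$ and decompose the identity along $\langle\xi\rangle\oplus H$ with $H=\ker\eta$. First, applying $\eta$ (and using $\eta\circ\f=0$) isolates the $\xi$-component, giving $\D w(x)\eta(y)=\D w(y)\eta(x)$; setting $y=\xi$ yields $\D w\circ\f=0$. Next, setting $x=\xi$ and using $\f\xi=0$ reduces the identity to $P(\xi)\f^2 y+Q(\xi)\f y=0$ for every $y$; since $\{\f y,\f^2 y\}$ is $\R$-linearly independent whenever $y\notin\langle\xi\rangle$ (because $\f|_H$ is a complex structure on $H$), this forces $P(\xi)=Q(\xi)=0$, i.e.\ $\D u(\xi)=\D v(\xi)=0$. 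Finally, for $x,y\in H$ one has $\f^2=-\Id$, so the identity becomes $-P(x)y+Q(x)\f y=-P(y)x+Q(y)\f x$; choosing $x$, $y$ with $\mathrm{span}\{x,\f x\}$ and $\mathrm{span}\{y,\f y\}$ linearly disjoint (possible as soon as $n\ge 2$) places the two sides in complementary planes, so both sides vanish and hence $P\equiv Q\equiv 0$ after varying $x$, $y$. Writing out $P=Q=0$ then rearranges to the two remaining $G_0$ conditions $\D u\circ\f^2+\D v\circ\f=0$ and $\D u\circ\f-\D v\circ\f^2=0$.

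The main subtlety I anticipate is this final decoupling step: in the borderline case $\dim M=3$ (i.e.\ $n=1$) there is no pair of disjoint $\f$-invariant planes in $H$, and one has to instead exploit the identity $A(x,\f x)=A(\f x,x)$, which produces relations of the form $Q(x)=P(\f x)$ and $P(x)=-Q(\f x)$; combined with $\f^3=-\f$ on $H$ these still yield $P=Q=0$ on $H$, so the conclusion survives in every admissible dimension.
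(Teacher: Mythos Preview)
Your argument is correct and follows the same route the paper indicates, namely reading off the obstruction from formula \eqref{T'TP} of \propref{prop:bar-n'-n'} and showing it vanishes precisely under \eqref{G0}. The paper's own proof is extremely terse: it records that \eqref{G0} collapses \eqref{bar-n'-n'} to the manifestly symmetric expression \eqref{bar-n'-n'-G0} (which gives sufficiency), and then simply asserts that the full statement ``follows from \eqref{T'TP} and \eqref{G0}'' without spelling out the necessity direction at all. What you have written is exactly that missing computation --- the $\xi$/$H$ decomposition, the extraction of $\D u(\xi)=\D v(\xi)=0$ and $\D w\circ\f=0$, and the pointwise vanishing of $P$ and $Q$ on $H$. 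Your handling of the borderline case $n=1$ is also sound: the built-in identities $P(\f x)=-Q(x)$ and $Q(\f x)=P(x)$ on $H$ (coming straight from the definitions of $P$, $Q$ and $\f^2|_H=-\Id$) combine with the symmetry relations $Q(x)=P(\f x)$, $P(x)=-Q(\f x)$ to force $P=Q=0$, so no disjoint-plane argument is needed there.
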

\begin{proof}
\propref{prop:bar-n'-n'} and \eqref{G0} imply immediately
\begin{equation}\label{bar-n'-n'-G0}
\begin{split}
    \bar{\n}'_xy=\n'_xy
    &-\D u(x)\f^2 y+\D v(x)\f y+\D w(\xi)\eta(x)\eta(y)\xi\\
    &-\D u(y)\f^2 x+\D v(y)\f
    x+g(\f x,\f y)p-g(x,\f y)q.
\end{split}
\end{equation}
The statement follows from \eqref{bar-n'-n'-G0}, or alternatively
from \eqref{T'TP} and \eqref{G0}.
\end{proof}

Bearing in mind the invariance of $\F_i$ $(i=1,2,\dots,11)$ and
$T'$ with respect to the transformations of $G_0$, we establish
that each of the eleven basic classes of the manifolds $\M$ is
characterized by the torsion of the $\f$-canonical connection.
Then we give this characterization in the following
%\subsection{Characterization of the Classes of Almost Contact
%B-Metric Manifolds by the Torsion of the $\f$-Canonical
%Connection}
\begin{prop}\label{prop:FiT}
The basic classes of the almost contact B-metric manifolds are
characterized by conditions for the torsion of the $\f$-canonical
connection as follows:
\[
\begin{array}{rl}
\F_1:\; &T'(x,y)=\frac{1}{2n}\left\{t'(\f^2 x)\f^2 y-t'(\f^2 y)\f^2 x%\right.\\
            %&\phantom{T'(x,y)=\frac{1}{2n}}\left.
            +t'(\f x)\f y-t'(\f y)\f x\right\}; \\
\F_2:\; &T'(\xi,y)=0,\; \eta\left(T'(x,y)\right)=0,\; T'(x,y)=T'(\f x,\f y),\; t'=0;\\
\F_3:\; &T'(\xi,y)=0,\; \eta\left(T'(x,y)\right)=0,\; T'(x,y)=\f T'(x,\f y);\\
\F_4:\; &T'(x,y)=\frac{1}{2n}t'^*(\xi)\left\{\eta(y)\f x-\eta(x)\f y\right\};\\
\F_5:\; &T'(x,y)=\frac{1}{2n}t'(\xi)\left\{\eta(y)\f^2 x-\eta(x)\f^2 y\right\};\\
\F_6:\; &T'(x,y)=\eta(x)T'(\xi,y)-\eta(y)T'(\xi,x),\;\\
&T'(\xi,y,z)=T'(\xi,z,y)=-T'(\xi,\f y,\f z);\\
\F_{7/8}:\; &T'(x,y)=\eta(x)T'(\xi,y)-\eta(y)T'(\xi,x)+\eta(T'(x,y))\xi,\\
            &T'(\xi,y,z)=-T'(\xi,z,y)=\mp T'(\xi,\f y,\f z)%\\
            %&\phantom{T'(\xi,y,z)}
            =\frac{1}{2}T'(y,z,\xi)=\mp \frac{1}{2}T'(\f y,\f z,\xi);\\
\F_{9/10}:\; &T'(x,y)=\eta(x)T'(\xi,y)-\eta(y)T'(\xi,x),\; \\
            &T'(\xi,y,z)=\pm T'(\xi,z,y)=T'(\xi,\f y,\f z);\\
\F_{11}:\; &T'(x,y)=\left\{\hat{t'}(x)\eta(y)-\hat{t'}(y)\eta(x)\right\}\xi.\\
\end{array}
\]
\end{prop}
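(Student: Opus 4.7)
The plan is a direct class-by-class verification. For each basic class $\F_i$, I would substitute the defining expression of $F$ from \eqref{Fi} into the explicit formula \eqref{T-can-F} for $T'$, expanding $T^0$ via \eqref{T0} and $N$ via \eqref{NF} (or equivalently using the class-by-class formulas \eqref{N-1-11}), and then simplifying using the symmetry properties \eqref{F-prop} of $F$ together with the torsion-form identities \eqref{TD-sledi} and \eqref{t't'*}.

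The computation splits cleanly into two groups. For the five classes $\F_1$, $\F_2$, $\F_4$, $\F_5$, $\F_6$ contained in $\U_0$, \lemref{lem-Nfifi} gives $N(\f\cdot,\f\cdot)=0$, so by \corref{cor-Q=Q0-class} formula \eqref{T-can-F} collapses to $T'=T^0$, and only \eqref{T0} is needed. The $\F_1$ case uses $t'^*=-\tfrac{1}{2}\{\ta+\ta(\xi)\eta\}$ and $t'=\tfrac{1}{2}\{\ta^*+\ta^*(\xi)\eta\}$ from \eqref{TD-sledi} to convert the $\ta$-combinations appearing after substitution into $t'$ and $t'^*$; the $\F_4$ and $\F_5$ cases produce the scalar factors $t'^*(\xi)$ and $t'(\xi)$ respectively; and the $\F_2$, $\F_6$ cases yield the stated vanishing and $\f$-symmetry conditions on $T'$. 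For the remaining six classes $\F_3$, $\F_7$, $\F_8$, $\F_9$, $\F_{10}$, $\F_{11}$ the Nijenhuis correction in \eqref{T-can-F} contributes nontrivially, and I would read off each $N$ directly from the corresponding line of \eqref{N-1-11} before substituting into $T'$.

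The converse direction -- that each listed torsion condition in turn forces $\M$ into the corresponding $\F_i$ -- follows from the $G_0$-invariance argument of the preceding theorem: since both the eleven basic classes and the torsion $T'$ are $G_0$-invariant, and since the correspondence $F\mapsto T'$ defined by \eqref{Q-can-F}--\eqref{T-can-F} is injective modulo the canonical-torsion identity \eqref{T-can}, each $\F_i$ is fully determined by its image under this map, which is exactly the set of $T'$ satisfying the stated condition. The main obstacle is purely bookkeeping: keeping track of the $\f$-symmetrizations, of the antisymmetrization $[x\leftrightarrow y]$, and of the cyclic sum $\sx$ simultaneously in classes $\F_7$, $\F_8$, $\F_9$, where sign-coupled $\f$-identities tie $T'(\xi,y,z)$ to $T'(y,z,\xi)$ precisely through the nonzero $N$-contribution, giving the factor $\tfrac{1}{2}$ and the $\mp$ pairing in the stated conditions.
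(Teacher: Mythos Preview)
Your approach is essentially the paper's, but your two-group split is wrong. You place only $\F_1,\F_2,\F_4,\F_5,\F_6$ in the ``easy'' group where the Nijenhuis correction in \eqref{T-can-F} vanishes, and you claim the correction ``contributes nontrivially'' for the remaining six. But \lemref{lem-Nfifi} defines $\U_0=\F_1\oplus\F_2\oplus\F_4\oplus\F_5\oplus\F_6\oplus\F_8\oplus\F_9\oplus\F_{10}\oplus\F_{11}$: nine classes, not five. The Nijenhuis correction $N(\f\cdot,\f\cdot)$ vanishes on $\F_8,\F_9,\F_{10},\F_{11}$ as well --- inspect the relevant lines of \eqref{N-1-11} and note that every surviving term carries a factor $\eta(x)$ or $\eta(y)$, hence dies under $x\mapsto\f x$, $y\mapsto\f y$. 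The paper's split is therefore $\{i\neq 3,7\}$ versus $\{3,7\}$, and $T'=T^0$ is available for all nine classes in $\U_0$.

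This misreading is not fatal: if you actually carried out your plan and substituted \eqref{N-1-11} into \eqref{T-can-F} for $\F_8,\F_9,\F_{10},\F_{11}$, you would discover the correction vanishes and recover $T'=T^0$ anyway. But your explanatory sentence about the $\tfrac{1}{2}$ and $\mp$ in the $\F_{7/8}$ line arising ``precisely through the nonzero $N$-contribution'' is then half wrong: for $\F_8$ those relations come entirely from $T^0$ and the antisymmetry $F(x,y,\xi)=-F(y,x,\xi)$ in \eqref{Fi}, with no help from $N$. Only $\F_3$ and $\F_7$ genuinely require the Nijenhuis term.
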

\begin{proof}
According to \propref{prop-Q=Q0}, \corref{cor-Q=Q0-class},
\eqref{T0} and \eqref{T-can-F}, we have the following form of the
torsion of the $\f$-canonical connection when $\M$ belongs to the
classes $\F_i$ $(i\in\{1,2,\dots,11\}; i\neq 3,7)$:
\begin{equation*}\label{TD}
\begin{split}
T'(x,y)=T^0(x,y)=&\frac{1}{2}\bigl\{\left(\n_x\f\right)\f
y+\left(\n_x\eta\right)y\cdot\xi+2\eta(x)\n_y\xi\bigr\}_{[x\leftrightarrow
y]}.
\end{split}
\end{equation*}
For the classes $\F_3$ and $\F_7$, we use \eqref{T-can-F} and
equalities \eqref{N-1-11}.

Then, using \eqref{F-prop}, \eqref{TD-sledi}, \eqref{t't'*} and
\eqref{Fi}, we obtain the characteristics in the statement.
\end{proof}

According to the classification of the torsion tensors in
\cite{ManIv36} and \propref{prop:FiT}, we get the following
\begin{prop}\label{prop:FiT'jk}
Let $T'$ be the torsion of the $\f$-canonical connection on an
almost contact B-metric manifold $M=\M$. The correspondence
between the classes $\F_i$ of $M$ and the classes $\T_{jk}$ of
$T'$ is given as follows:
\[
\begin{array}{ll}
M\in\F_1\; \Leftrightarrow \; T'\in\T_{13},\; t'\neq 0; \qquad &
M\in\F_7\; \Leftrightarrow \; T'\in\T_{21}\oplus\T_{32}; \\
M\in\F_2\; \Leftrightarrow \; T'\in\T_{13},\; t'= 0; \qquad &
M\in\F_8\; \Leftrightarrow \; T'\in\T_{22}\oplus\T_{34}; \\
M\in\F_3\; \Leftrightarrow \; T'\in\T_{12}; \qquad &
M\in\F_9\; \Leftrightarrow \; T'\in\T_{33}; \\
M\in\F_4\; \Leftrightarrow \; T'\in\T_{31},\; t'= 0,\; t'^*\neq 0;
\qquad &
M\in\F_{10}\; \Leftrightarrow \; T'\in\T_{34}; \\
M\in\F_5\; \Leftrightarrow \; T'\in\T_{31},\; t'\neq 0,\; t'^*= 0;
\qquad & M\in\F_{11}\; \Leftrightarrow \; T'\in\T_{41}.\\
M\in\F_6\; \Leftrightarrow \; T'\in\T_{31},\; t'= 0,\; t'^*= 0;
\qquad &
\end{array}
\]
\end{prop}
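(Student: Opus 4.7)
The plan is to exploit \propref{prop:FiT}, which already gives the explicit shape of the torsion $T'$ for a manifold in each basic class $\F_i$, and then simply match these shapes against the defining identities of the classes $\T_{jk}$ listed in Section~2. For the forward implication $M\in\F_i\Rightarrow T'\in\T_{jk}$, the work is entirely algebraic: take the formula for $T'$ provided by \propref{prop:FiT}, pair it with $z$ to get the $(0,3)$-tensor, and verify the characteristic conditions of the target class $\T_{jk}$ together with the prescribed constraints on the torsion forms $t'$, $t'^*$, $\hat t'$. The relations \eqref{TD-sledi} and \eqref{t't'*} convert statements about $\ta$, $\ta^*$, $\om$ (which determine membership in $\F_1,\dots,\F_5,\F_{11}$) into the corresponding statements about $t'$, $t'^*$, $\hat t'$, so that the additional nondegeneracy conditions (for example $t'\neq 0$ vs.\ $t'=0$ in separating $\F_1$ from $\F_2$, or the twin conditions on $t'$ and $t'^*$ separating $\F_4$, $\F_5$, $\F_6$ inside $\T_{31}$) emerge automatically.

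In concrete terms, I would proceed class by class. For $\F_1$: insert $T'(x,y)=\frac{1}{2n}\{t'(\f^2x)\f^2 y-t'(\f^2 y)\f^2 x+t'(\f x)\f y-t'(\f y)\f x\}$ and observe that $\f\xi=0$ forces $T'(\xi,\cdot)=0$ and $\eta\circ T'=0$, the $\f$-linearity yields $T'(x,y,z)=T'(\f x,\f y,z)$, and the cyclic sum vanishes because each summand $t'(\f^k x)\f^k y$ is antisymmetric in $x,y$ after pairing with $z$; thus $T'\in\T_{13}$, and $t'\neq 0$ is exactly the condition excluding $\F_2$. For $\F_4$ and $\F_5$ the expressions are $\eta$-linear and proportional to $\f$ or $\f^2$, which places them in $\T_{31}$; the distinction between the three members of $\T_{31}$ is encoded in which of $t',t'^*$ vanishes by \eqref{TD-sledi}. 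The classes $\F_9$, $\F_{10}$, $\F_{11}$ are handled similarly against $\T_{33}$, $\T_{34}$, $\T_{41}$.

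The main obstacle will be $\F_3$, $\F_7$, and $\F_8$, where by \corref{cor-Q=Q0-class} we cannot replace $T'$ by $T^0$ and must use the full correction term in \eqref{T-can-F} involving $N(\f^2\cdot,\f^2\cdot,\f^2\cdot)$ and $N(\f\cdot,\f\cdot,\xi)\eta(\cdot)$. Here I would feed the corresponding line of \eqref{N-1-11} into \eqref{T-can-F}, then use \lemref{lem-N} and \eqref{F-prop} to reorganise the $\f$-shifts. For $\F_3$ this yields $T'(\xi,\cdot)=0$, $\eta\circ T'=0$, and $T'(x,y,z)=-T'(\f x,\f y,z)=T'(x,\f y,\f z)$, which is precisely $\T_{12}$. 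For $\F_7$ and $\F_8$ the resulting tensor decomposes into a part supported on the distribution $H$ and a $\xi$-component; checking that these fall exactly into $\T_{21}\oplus\T_{32}$ and $\T_{22}\oplus\T_{34}$ respectively requires the $\pm$ sign identities in the definitions of $\F_{6/7}$, $\F_{8/9}$ together with the self-adjointness or anti-self-adjointness of $\n\xi$ that these imply.

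For the reverse implications, rather than re-proving each one by hand, I would invoke the fact that the map $F\mapsto T'$ defined by combining \eqref{Q0}, \eqref{T0}, \eqref{NF}, and \eqref{T-can-F} is linear, $G_0$-equivariant, and injective (since by the uniqueness remark following \eqref{T-can-F} the canonical connection, hence its torsion, determines $F$). Consequently the eleven image subspaces $T'(\F_i)$ are linearly independent and form a direct-sum decomposition inside the space of torsions; the forward inclusions established above are therefore automatically equivalences, as each right-hand side in the statement is already characterised by a collection of conditions mutually exclusive across the eleven cases.
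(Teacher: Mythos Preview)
Your approach is essentially the same as the paper's, which reduces the proposition to a direct match of the explicit torsion formulas in \propref{prop:FiT} against the defining conditions of the $\T_{jk}$; the paper's proof is literally a one-line citation of \propref{prop:FiT} and the torsion classification, so your expanded sketch is in fact more detailed than what the paper provides.

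One factual slip to correct: you list $\F_8$ among the ``obstacles'' where $T'\neq T^0$, but by \lemref{lem-Nfifi} and \corref{cor-Q=Q0-class} the class $\F_8$ lies in $\U_0$, so $T'=T^0$ there and the only genuine obstacles are $\F_3$ and $\F_7$. This does not affect your strategy, it only means $\F_8$ is handled with the easy cases.

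Your injectivity argument for the reverse implications is sound (Hayden's formula recovers $Q'$ from $T'$, and the first equation in \eqref{D-can} recovers $F$ from $Q'$), and it gives a cleaner justification of the biconditionals than the paper's bare assertion. Just be careful with the claim that the right-hand sides are ``mutually exclusive'': $\T_{34}$ appears both for $\F_8$ (inside $\T_{22}\oplus\T_{34}$) and alone for $\F_{10}$, so you must note that on an $\F_8$-manifold the constraint $T'(\xi,y,z)=\tfrac{1}{2}T'(y,z,\xi)$ from \propref{prop:FiT} ties the $\T_{22}$- and $\T_{34}$-components together, preventing a nonzero pure $\T_{34}$-torsion from arising there.
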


\section{The relation between the almost contact B-metric manifolds
and the K\"ahlerian manifolds with Norden metric}

Firstly, let us consider the  manifold $M^{\times}=M\times\R$,
where $\M$ is a $(2n+1)$-dimensional almost contact B-metric
manifold. The almost complex structure on $M^{\times}$ is defined
(as in \cite{Blair}) by $J(x,a\ddt)=(\f x-a\xi,\eta(x)\ddt)$,
where $x\in\X(M)$, $t$ is the coordinate on $\R$, $\ddt=\dt$ and
$a$ is a differentiable function on $M^{\times}$. Let us consider
the product metric
$g^{\times}\left(x^{\times},y^{\times}\right)=g(x,y)-ab$ for
$x^{\times}=(x,a\ddt)$, $y^{\times}=(y,b\ddt)$ on $M^{\times}$.
Since $g$ is a B-metric, then $g^{\times}$ is a Norden metric,
i.e.
$g^{\times}\left(Jx^{\times},Jy^{\times}\right)=-g^{\times}\left(x^{\times},y^{\times}\right)$.
Thus $(M^{\times},J,g^{\times})$ is an almost Norden manifold. We
consider $M$ as a hypersurface of $M^{\times}$. Then the
Gauss-Weingarten equations are
$\n^{\times}_{x}y=\n_xy-g(Ax,y)\ddt$, $\n^{\times}_{x}\ddt=-Ax$,
where $\n^{\times}$ is the Levi-Civita connection for $g^{\times}$
and $A$ is its Weingarten map. Then we have
\[
\begin{array}{l}
\left(\n^{\times}_xJ\right)y=\left(\n_x\f\right)y-\eta(y)Ax-g(Ax,y)\xi+\left\{\left(\n_x\eta\right)y-g(Ax,\f
y)\right\}\ddt,\;\;\; \left(\n^{\times}_{\ddt}J\right)\ddt=A\xi,
\\
\left(\n^{\times}_xJ\right)\ddt=-\n_x\xi+\f Ax+2\eta(Ax)\ddt,\quad
\left(\n^{\times}_{\ddt}J\right)y=\f Ay-A\f y+\eta(Ay)\ddt.
\end{array}
\]

If we set $(M^{\times},J,g^{\times})$ to be a K\"ahlerian manifold
with  Norden metric, i.e. $\n^{\times}J=0$, we obtain
\[
\left(\n_x\f\right)y=\eta(y)Ax+g(Ax,y)\xi, \qquad
\left(\n_x\eta\right)y=g(Ax,\f y),
\]
\[
\n_x\xi=\f Ax,\qquad \f\circ A=A\circ\f ,\qquad \eta(Ax)=0,\qquad
A\xi=0.
\]
Therefore we have
$$F(x,y,z)=F(x,y,\xi)\eta(z)+F(x,z,\xi)\eta(y),\quad F(x,y,\xi)=
F(y,x,\xi)=-F(\f x,\f y,\xi).$$
Thus, bearing in mind \eqref{Fi}, the manifold $\M$ belongs to the
class  $\F_4\oplus\F_5\oplus\F_6$, which may be viewed as the
class of the almost contact B-metric manifolds of  Sasakian type
as an analogy to the contact metric geometry \cite{Blair}.

Secondly, in \cite{GaMiGr} it is given an example of the
considered manifolds as follows.
Let the vector space
$\R^{2n+2}=\left\{\left(u^1,\dots,u^{n+1};v^1,\dots,v^{n+1}\right)\
|\ u^i,v^i\in\R\right\}$ be considered as a complex Riemannian
manifold with the canonical complex structure $J$ and the metric
$g$ defined by
$g(x,x)=-\delta_{ij}\lm^i\lm^j+\delta_{ij}\mu^i\mu^j$ for
$x=\lm^i\frac{\partial}{\partial
u^i}+\mu^i\frac{\partial}{\partial v^i}$. Identifying the point
$p\in\R^{2n+2}$ with its position vector it is considered the
time-like sphere $S: g(n,n)=-1$ of $g$ in $\R^{2n+2}$, where $n$
is the unit normal to the tangent space $T_pS$ at $p\in S$. It is
set $g(n,Jn)=\tan\psi$,
$\psi\in\left(-\frac{\pi}{2},\frac{\pi}{2}\right)$. Then the
almost contact structure is introduced by
$\xi=\sin\psi.n+\cos\psi.Jn$, $\eta=g(\cdot,\xi)$,
$\f=J-\eta\otimes J\xi$. It is shown that $(S,\f,\xi,\eta,g)$ is
an almost contact B-metric manifold in the class $\F_4\oplus\F_5$.

Since the $\f$-canonical connection coincides with the
$\f$B-connection on any manifold in $\F_4\oplus\F_5$, according to
\corref{cor-Q=Q0-class}, then by virtue of \eqref{T0} we get the
torsion tensor and the torsion forms of the $\f$-canonical
connection as follows:
\[
\begin{array}{l}
T'(x,y,z)=\bigl\{\eta(x)\left\{\cos\psi g(y,\f z)
-\sin\psi g(\f y,\f z)\right\}\bigr\}_{[x\leftrightarrow y]},\\
t'=2n\sin\psi\eta,\qquad t'^*=-2n\cos\psi\eta,\qquad \hat{t}'=0.
\end{array}
\]
These equalities are in accordance with \propref{prop:FiT}.
Moreover, it follows that the statement $T'\in\T_{31}$ is valid,
which confirms \propref{prop:FiT'jk}.

% BibTeX users please use one of
%\bibliographystyle{spbasic}      % basic style, author-year citations
%\bibliographystyle{spmpsci}      % mathematics and physical sciences
%\bibliographystyle{spphys}       % APS-like style for physics
%\bibliography{}   % name your BibTeX data base

\begin{thebibliography}{}
%
% and use \bibitem to create references. Consult the Instructions
% for authors for reference list style.
%

\bibitem{Blair}
Blair, D. E.: Riemannian Geometry of Contact and Symplectic
Manifolds. Progress in Mathematics 203, Birkh\"auser, Boston
(2002)

\bibitem{Fri-Iv2}
Friedrich, T., Ivanov, S.: Parallel spinors and connections with
skew-sym\-met\-ric torsion in string theory. Asian J. Math. 6,
303--336 (2002)

\bibitem{Fri-Iv}
Friedrich, T., Ivanov, S.: Almost contact manifolds, connections
with torsion, and parallel spinors. J. Reine Angew. Math. 559,
217--236 (2003)


\bibitem{GaGrMi}
Ganchev, G., Gribachev, K., Mihova, V.: B-connections and their
conformal invariants on conformally Kaehler manifolds with
B-metric. Publ. Inst. Math. (Beograd) (N.S.) 42(56), 107--121
(1987)

%\bibitem{GaIv}
%Ganchev, G., Ivanov, S.: Connections and curvatures on complex
%Riemmanian manifolds, Internal Report IC/91/41, Int. Centre of
%Theoretical Physics, Trieste (1991) ???????????????

\bibitem{GaIv}
Ganchev, G., Ivanov, S.: Characteristic curvatures on complex
Riemannian manifolds. Riv. Mat. Univ. Parma (5) 1, 155--162 (1992)

\bibitem{GaMi87}
Ganchev, G., Mihova, V.: Canonical connection and the canonical
conformal group on an almost complex manifold with B-metric. Ann.
Univ. Sofia Fac. Math. Inform. 81, 195--206 (1987)


\bibitem{GaMiGr}
Ganchev, G., Mihova, V., Gribachev, K.: Almost contact manifolds
with B-metric. Math. Balkanica (N.S.) 7 (3-4), 261--276 (1993)


\bibitem{GHR}
Gates, S. J., Hull, C. M., Ro\v{c}ek, M.: Twisted multiplets and
new supersymmetric non-linear $\sigma$-models. Nucl. Phys. B 248,
 157--186 (1984)


\bibitem{Gaud}
Gauduchon, P.: Hermitian connections and Dirac operators. Boll.
Unione Mat. Ital. 11, 257--288 (1997)

\bibitem{Dobr11-1}
Gribacheva, D.: Natural connections on Riemannian product
manifolds. Compt. rend. Acad. bulg. Sci. 64 (6), 799--806 (2011)

\bibitem{Dobr11-2}
Gribacheva, D.:  Natural connections on conformal Riemannian
P-manifolds. Compt. rend. Acad. bulg. Sci. 65 (5), 581--590 (2012)

\bibitem{MekDobr}
Gribacheva, D., Mekerov, D.: Canonical connection on a class of
Riemannian almost product ma\-ni\-folds. J. Geom. 102 (1-2),
53--71 (2011)

\bibitem{Hay}
Hayden, H.:  Subspaces of a space with torsion. Proc. London Math.
Soc. 34, 27--50 (1934)

\bibitem{IvPapa}
Ivanov, S., Papadopoulos, G.: Vanishing theorems and string
backgrounds. Classical Quant. Grav. 18, 1089--1110 (2001)

\bibitem{Li1}
Lichnerowicz, A.: Un th\'{e}or\`{e}me sur les espaces
homog\`{e}nes complexes. Arch. Math. 5, 207--215 (1954)

\bibitem{Li2}
Lichnerowicz, A.: G\'{e}n\'{e}ralisation de la g\'{e}om\'{e}trie
k\"{a}hl\'{e}rienne globale. Coll. de G\'{e}om. Diff. Louvain,
99--122 (1955)

\bibitem{Man3}%3.
Manev, M.: Properties of curvature tensors on almost contact
manifolds with B-metric. Proc. of Jubilee Sci. Session of Vassil
Levsky Higher Mil. School, Veliko Tarnovo, 27, 221--227  (1993)

\bibitem{Man4}%4.
Manev, M.: Contactly conformal transformations of general type of
almost contact manifolds with B-metric. Applications. Math.
Balkanica (N.S.) 11 (3-4), 347--357 (1997)

%\bibitem{Man-diss}
%Manev, M.: On the conformal geometry of almost contact manifolds
%with B-metric. Ph.D. Thesis, Plovdiv University (1998)

\bibitem{Man31}%31.
Manev, M.: A connection with totally skew-symmetric torsion on
almost con\-tact manifolds with B-metric. Int. J. Ge\-om. Methods
Mod. Phys. 9 (5), 1250044 (20 pa\-ges) (2012)


\bibitem{ManGri1}%1.
Manev, M., Gribachev, K.: Contactly conformal transformations of
almost contact manifolds with B-metric. Serdica Math. J. 19,
287--299 (1993)


\bibitem{ManGri2}%2.
Manev, M., Gribachev, K.: Conformally invariant tensors on almost
con\-tact manifolds with B-metric. Serdica Math. J. 20, 133--147
(1994)

\bibitem{ManIv36}%36.
Manev, M., Ivanova, M.: A classification of the torsion tensors on
almost contact manifolds with B-metric. arXiv:1105.5715

\bibitem{ManIv37}%37.
Manev, M., Ivanova, M.: A natural connection on some classes of
almost contact manifolds with B-metric. Compt. rend. Acad. bulg.
Sci. 65 (4), 429--436  (2012)

\bibitem{Mek-JTech}
Mekerov, D.: Canonical connection on quasi-K\"ahler manifolds with
Norden metric. J. Tech. Univ. Plovdiv Fundam. Sci. Appl. Ser. A
Pure Appl. Math. 14, 73--86 (2009)

\bibitem{SaHa}
Sasaki, S., Hatakeyama, Y.: On differentiable manifolds with
certain structures which are closely related to almost contact
structures II, T\^{o}hoku Math. J. 13, 281--294 (1961)

\bibitem{Stro}
Strominger, A.:  Superstrings with torsion. Nucl. Phys. B 274,
253--284 (1986)



\end{thebibliography}

% Non-BibTeX users please use

\end{document}